\newtheorem{thm}{Theorem}[section]
\newtheorem{prop}[thm]{Proposition}
\newtheorem{cor}[thm]{Corollary}
\newtheorem{dfn}[thm]{Definition}
\theoremstyle{remark}
\newtheorem{rmkk}[thm]{Remark}
\newtheorem{exe}[thm]{Example}
\newenvironment{rmk}{\begin{rmkk}\rm}{\qee\end{rmkk}}
\newenvironment{ex}{\begin{exe}\rm}{\qee\end{exe}}
\newcommand{\qee}{\mbox{\hspace{0.2mm}}\hfill$\triangle$}
\newcommand{\Z}{\mathbb{Z}}
\newcommand{\R}{\mathbb{R}}
\newcommand{\C}{\mathbb{C}}
\newcommand{\Q}{\mathbb{Q}}
\newcommand{\Pj}{\mathbb{P}}
\newcommand{\N}{\mathbb{N}}
\newcommand{\cO}{\mathcal{O}}
\title{\bf \large A WEAK $(k,k)$-LEFSCHETZ THEOREM FOR PROJECTIVE\\ TORIC ORBIFOLDS} 
\author[]{William D. Montoya }
\affil[]{Instituto de Matemática, Estatística e Computação Científica, \par\vskip-3pt Universidade Estadual de Campinas (UNICAMP),\par\vskip-3pt Rua Sérgio Buarque de Holanda 651,
13083-859, Campinas, SP, Brazil}
\begin{document}
\maketitle

\begin{abstract}Firstly we show a generalization of the $(1,1)$-Lefschetz theorem for projective toric orbifolds and secondly  we prove that on $2k$-dimensional quasi-smooth hypersurfaces coming from quasi-smooth intersection surfaces, under the Cayley trick, every rational $(k,k)$-cohomology class is algebraic, i.e., the Hodge conjecture holds on them.
\end{abstract}

\let\svthefootnote\thefootnote
\let\thefootnote\relax\footnote{
\hskip-2\parindent 
% SISSA Preprint 08/2019/MATE \\ 
Date:  \today  \\
{\em 2020 Mathematics Subject Classification:}  14C30, 	14M10, 14J70, 14M25 \\ 
{\em Keywords:} (1,1)- Lefschetz theorem, Hodge conjecture, toric varieties, complete intersection \\
Email: {\tt wmontoya@ime.unicamp.br}
}
\addtocounter{footnote}{-1}\let\thefootnote\svthefootnote

\section{Introduction}
In \cite{BruzzoMontoya} we proved that, under suitable conditions, on a very general codimension $s$ quasi-smooth intersection subvariety $X$ in a projective toric  orbifold $\Pj^d_{\Sigma}$ with $d+s=2(k+1)$ the Hodge conjecture holds, that is, every $(p,p)$-cohomology class, under the Poincaré duality is a rational linear combination of fundamental classes of algebraic subvarieties of $X$. The proof of the above-mentioned result relies, for $p\neq d+1-s$, on a Lefschetz theorem (\cite{Mavlyutov}) and the Hard Lefschetz theorem for projective orbifolds (\cite{Wang_2009}). When $p=d+1-s$ the proof relies on the Cayley trick, a trick  which associates to $X$ a quasi-smooth hypersurface $Y$ in a projective vector bundle, and the Cayley Proposition  (\ref{Cayley})   which gives an isomorphism of some primitive cohomologies (\ref{primitive}) of $X$ and $Y$. The Cayley trick, following the philosophy of Mavlyutov in \cite{Mavlyutov}, reduces results known for quasi-smooth hypersurfaces to quasi-smooth intersection subvarieties. The idea in this paper goes  the other way around, we translate some results for quasi-smooth intersection subvarieties to quasi-smooth hypersurfaces, mainly the $(1,1)$-Lefschetz theorem.

\textbf{Acknowledgement.} I thank Prof. Ugo Bruzzo and Tiago Fonseca for useful discussions. I also acknowledge support from FAPESP postdoctoral grant No. 2019/23499-7.

\section{Preliminaries and Notation} 

\subsection{Toric varieties}
Let $M$ be a free abelian group of rank $d$, let
$N = Hom(M, Z)$, and $N_{\R} = N \otimes_{\Z} \R.$

\begin{dfn}
 \begin{itemize}
     \item A convex subset $\sigma \subset N_R$ is a rational $k$-dimensional simplicial cone if there exist $k$ linearly independent primitive elements $e_1,\dots, e_k \in N$ such that $\sigma = \{\mu_1e_1 +\cdots+\mu_ke_k\}$. 
     
     \item The generators $e_i$ are  integral if for every $i$ and any nonnegative rational number $\mu$ the product $\mu e_i$ is in $N$ only if $\mu$ is an integer.
     
     \item Given two rational simplicial cones $\sigma$, $\sigma'$ one says that $\sigma'$ is a face of $\sigma$ ($\sigma'< \sigma$)
     if the set of integral generators of $\sigma'$ is a subset of the set of integral generators of $\sigma$.
     
     \item  A finite set $\Sigma = \{\sigma_1,\dots, \sigma_t\}$ of rational simplicial cones is called a rational simplicial complete $d$-dimensional fan if:
\begin{enumerate}
    \item all faces of cones in $\Sigma$ are in $\Sigma$;
    \item if $\sigma,\sigma'\in \Sigma$ then $\sigma\cap \sigma'< \sigma$ and $\sigma\cap \sigma' <\sigma'$;

 \item $N_{\R}=\sigma_1\cup \dots \cup \sigma_t$.
\end{enumerate}     
\end{itemize}
\end{dfn}

A rational simplicial complete $d$-dimensional fan $\Sigma$ defines a $d$-dimensional toric variety $\Pj_{\Sigma}^d$ having only orbifold singularities which we assume to be projective. Moreover, $T:=N\otimes_{\Z} \C^{*} \simeq (\C^{*})^d$ is the torus action on $\Pj_{\Sigma}^d$.
We denote by $\Sigma(i)$ the $i$-dimensional cones of $\Sigma$ and each $\rho\in \Sigma$ corresponds to an irreducible $T$-invariant Weil divisor $D_{\rho}$ on $\Pj_{\Sigma}^d$. Let ${\rm Cl}(\Sigma)$ be the group of Weil divisors on $\Pj_{\Sigma}^d$ module rational equivalences.

The total coordinate ring of $\Pj_{\Sigma}^d$ is the polynomial ring $S=\C[x_{\rho}\mid \rho\in\Sigma(1)]$,  $S$ has the ${\rm Cl}(\Sigma)$-grading, a Weil divisor $D=\sum_{\rho\in\Sigma(1)}u_{\rho} D_{\rho}$ determines the monomial $x^u:=\prod_{\rho\in\Sigma(1)}x_{\rho}^{u_{\rho}}\in S   $ and conversely $\deg(x^u)=[D]\in {\rm Cl}(\Sigma)$.

For a cone $\sigma\in \Sigma$, $\hat{\sigma}$ is the set of $1$-dimensional cone in $\Sigma$ that are not contained in $\sigma$ and $x^{\hat{\sigma}}:=\prod_{\rho\in \hat{\sigma}}x_{\rho}$ is the associated monomial in $S$. 

\begin{dfn} The irrelevant ideal of $\Pj_{\Sigma}^d$ is the monomial ideal $B_{\Sigma}:=<x^{\hat{\sigma}}\mid \sigma \in \Sigma>$ and the zero locus $Z(\Sigma): =\mathbb{V}(B_{\Sigma})$ in the affine space $\mathbb{A}^d:=Spec(S)$ is the irrelevant locus.
  
\end{dfn}

\begin{prop}[Theorem 5.1.11 \cite{CoxLittleSchenck}] The toric variety $\Pj_{\Sigma}^d$ is a categorical quotient $\mathbb{A}^d\setminus Z(\Sigma) $ by the group $Hom({\rm Cl}(\Sigma), \C^{*})$ and the group action is induced by the ${\rm Cl}(\Sigma)$-grading of $S$.

\end{prop}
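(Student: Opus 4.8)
This is \cite[Theorem 5.1.11]{CoxLittleSchenck}, so in practice one simply cites it; I sketch the \emph{Cox construction} proof, in three steps: produce the group $G$ and its action from an exact sequence, identify the quotient on the standard affine charts, then glue and check the universal property. For the first step, since $\Sigma$ is complete its rays span $N_\R$, so the map $\Z^{\Sigma(1)}\to\Cl(\Sigma)$, $e_\rho\mapsto[D_\rho]$, is surjective with kernel the image of $M\hookrightarrow\Z^{\Sigma(1)}$, $m\mapsto(\langle m,v_\rho\rangle)_\rho$ (with $v_\rho\in N$ the primitive generator of $\rho$); this gives $0\to M\to\Z^{\Sigma(1)}\to\Cl(\Sigma)\to 0$. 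Applying $\operatorname{Hom}_\Z(-,\C^*)$, which is exact because $\C^*$ is divisible, yields $1\to G\to(\C^*)^{\Sigma(1)}\to T\to 1$ with $G:=\operatorname{Hom}(\Cl(\Sigma),\C^*)$, and the resulting $G$-action on $\A^d=\operatorname{Spec}(S)$ is precisely the one induced by the $\Cl(\Sigma)$-grading, since $g\cdot x_\rho=g([D_\rho])\,x_\rho$.

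For the second step, the affine opens $(\A^d)_{x^{\hat\sigma}}$, as $\sigma$ ranges over $\Sigma$, cover $\A^d\setminus Z(\Sigma)$, and the heart of the argument is the identification of invariant rings $\big(S_{x^{\hat\sigma}}\big)^G\cong\C[\sigma^\vee\cap M]$. Indeed a Laurent monomial $\prod_\rho x_\rho^{a_\rho}$ is $G$-invariant iff its $\Cl(\Sigma)$-degree vanishes, iff $(a_\rho)=(\langle m,v_\rho\rangle)_\rho$ for some $m\in M$ by the exact sequence above; and it is regular on $(\A^d)_{x^{\hat\sigma}}$ iff $a_\rho\ge 0$ for all $\rho\in\sigma(1)$, i.e.\ $\langle m,v_\rho\rangle\ge 0$ on the generators of $\sigma$, i.e.\ $m\in\sigma^\vee\cap M$. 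Hence the affine GIT quotient is $(\A^d)_{x^{\hat\sigma}}/\!\!/G=\operatorname{Spec}\C[\sigma^\vee\cap M]=U_\sigma$, the affine chart of $\Pj^d_\Sigma$ attached to $\sigma$.

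For the third step, if $\tau<\sigma$ then on the overlap $x^{\hat\tau}$ and $x^{\hat\sigma}$ differ by an invertible function, and the inclusions of invariant rings reproduce exactly the gluing maps $U_\tau\hookrightarrow U_\sigma$ that define $\Pj^d_\Sigma$; thus the affine quotients glue to a $G$-invariant surjection $\pi:\A^d\setminus Z(\Sigma)\to\Pj^d_\Sigma$. For the categorical property, any $G$-invariant morphism $f:\A^d\setminus Z(\Sigma)\to Y$ restricts on each $(\A^d)_{x^{\hat\sigma}}$ to a $G$-invariant morphism of affine schemes, hence factors uniquely through $\operatorname{Spec}$ of the invariant ring, i.e.\ through $U_\sigma$; by uniqueness these local factorizations agree on overlaps and glue to the unique $\bar f:\Pj^d_\Sigma\to Y$ with $\bar f\circ\pi=f$. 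I expect the main obstacle to be the invariant-ring computation of the second step together with its compatibility with the toric atlas in the third step — lining up the combinatorics of $x^{\hat\sigma}$, the dual cones $\sigma^\vee\cap M$, and the $\Cl(\Sigma)$-grading — while the categorical-quotient formalism itself is routine once the affine case and the gluing are in hand.
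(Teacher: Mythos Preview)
Your sketch is correct and is essentially the standard Cox construction argument from \cite{CoxLittleSchenck}: the divisor exact sequence gives the group $G=\operatorname{Hom}(\Cl(\Sigma),\C^*)$ and its action, the invariant-ring computation $\big(S_{x^{\hat\sigma}}\big)^G\cong\C[\sigma^\vee\cap M]$ identifies the affine quotients with the toric charts $U_\sigma$, and the gluing plus the affine universal property assemble into the categorical quotient. There is nothing to compare against, however: the paper does not supply its own proof of this proposition at all --- it is stated purely as a citation of Theorem~5.1.11 in \cite{CoxLittleSchenck} and used as background, so your opening remark that ``in practice one simply cites it'' is exactly what the paper does.
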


\subsection{Orbifolds}
Now we give a brief introduction to complex orbifolds and we mention the needed theorems for the next section. Namely: de Rham theorem and  Dolbeault theorem for complex orbifolds. 

\begin{dfn} 
A complex orbifold of complex dimension $d$ is a singular complex space whose singularities are locally isomorphic to quotient singularities $\C^d/ G$, for finite subgroups $G\subset Gl(d,\C)$.
\end{dfn}

\begin{dfn} A differential form on a complex orbifold $Z$ is defined locally at $z\in Z$ as a $G$-invariant differential form on $\C^d$ where $G\subset Gl(d,\C)$  and $Z$ is locally isomorphic to $\C^d/G$ around $z$.
 \end{dfn}

Roughly speaking the local geometry of orbifolds reduces to local $G$-invariant geometry. 

We have a  complex of differential forms $(A^{\bullet}(Z),d)$ and a double complex  $(A^{\bullet,\bullet}(Z), \partial, \bar{\partial})$ of  bigraded differential forms which define the de Rham and the Dolbeault cohomology groups (for a fixed $p\in \N$) respectively:

$$H^{\bullet}_{dR}(Z,\C):=\frac{\ker d}{{\rm im}\,  d} \,\,\ \ {\rm and}\,\, \ \ H^{p,\bullet}(Z,\bar{\partial}):= \frac{\ker \bar{\partial}}{{\rm im}\,   \bar{\partial}}$$

\begin{thm}[Theorem 3.4.4 in \cite{CaramelloJr} and Theorem 1.2 in \cite{ANGELLA2013117} ] Let $Z$ be a compact complex orbifold. There are natural isomorphisms:

\begin{itemize}
    \item $H^{\bullet}_{dR}(Z,\C)\simeq H^{\bullet}(Z,\C)$
    \item $H^{p,\bullet}(Z,\bar{\partial})\simeq H^{\bullet}(X, \Omega_Z^p)  $
\end{itemize}

\end{thm}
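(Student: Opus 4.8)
The plan is to deduce both isomorphisms from the abstract de Rham theorem, by exhibiting fine resolutions of the relevant sheaves by sheaves of orbifold differential forms; the only orbifold-specific ingredient is an averaging argument over the local uniformizing groups.

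First I would define, on the underlying topological space of $Z$, the sheaf $\mathcal{A}^k_Z$ whose sections over an open set are the orbifold $k$-forms (that is, compatible collections of $G$-invariant smooth $k$-forms on the local models $\C^d/G$), and likewise the sheaves $\mathcal{A}^{p,q}_Z$ of orbifold $(p,q)$-forms. Since $Z$ is compact, hence paracompact, and each local model is a quotient of a ball by a finite group $G\subset GL(d,\C)$, one obtains $G$-invariant smooth partitions of unity by averaging ordinary ones over $G$; hence $\mathcal{A}^k_Z$ and $\mathcal{A}^{p,q}_Z$ are fine sheaves, in particular acyclic for sheaf cohomology.

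Next I would establish the orbifold Poincaré and Dolbeault--Grothendieck lemmas, namely that
$0\to \underline{\C}\to \mathcal{A}^0_Z\xrightarrow{d}\mathcal{A}^1_Z\xrightarrow{d}\cdots$
and, for each fixed $p$,
$0\to \Omega^p_Z\to \mathcal{A}^{p,0}_Z\xrightarrow{\bar{\partial}}\mathcal{A}^{p,1}_Z\xrightarrow{\bar{\partial}}\cdots$
are exact, where $\Omega^p_Z$ is the coherent sheaf of $G$-invariant holomorphic $p$-forms on charts. Exactness is a local statement on a model $\C^d/G$: lift a closed invariant form to the ball, apply the classical Poincaré (respectively Dolbeault--Grothendieck) lemma to obtain a primitive $\eta$, and replace $\eta$ by $\tfrac{1}{|G|}\sum_{g\in G} g^{*}\eta$. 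Since $G$ acts by linear biholomorphisms, this average is $G$-invariant, still a primitive of the original form, and the action preserves the $(p,q)$-bigrading, so the Dolbeault version carries over verbatim; the identification $\Omega^p_Z=\ker(\bar{\partial}\colon \mathcal{A}^{p,0}_Z\to \mathcal{A}^{p,1}_Z)$ is then immediate.

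Finally, with fine resolutions $\underline{\C}\to\mathcal{A}^\bullet_Z$ and $\Omega^p_Z\to\mathcal{A}^{p,\bullet}_Z$ in hand, the abstract de Rham theorem yields $H^k(Z,\underline{\C})\simeq H^k_{dR}(Z,\C)$ and $H^q(Z,\Omega^p_Z)\simeq H^{p,q}(Z,\bar{\partial})$; and since the space underlying $Z$ is paracompact and locally contractible, $H^\bullet(Z,\underline{\C})$ is singular cohomology $H^\bullet(Z,\C)$. Naturality is automatic, as the sheaves, resolutions and comparison maps are all functorial in $Z$. I expect the two Poincaré-type lemmas in the orbifold category to be the only real point — producing invariant local primitives and checking compatibility of the $G$-action with $d$, $\bar{\partial}$ and the bigrading — while fineness of the form sheaves is a minor matter dispatched by the same averaging device.
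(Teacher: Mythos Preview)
Your argument is correct and is precisely the standard route to the de Rham and Dolbeault theorems for orbifolds: fine resolutions of $\underline{\C}$ and $\Omega^p_Z$ by sheaves of orbifold forms, local exactness via the classical lemmas plus averaging over the finite uniformizing groups, and then the abstract de Rham theorem. The only point to be slightly careful about is the identification of sheaf cohomology with singular cohomology; local contractibility of $\C^d/G$ (e.g.\ via the scaling contraction, which descends since $G$ acts linearly) handles this as you indicate.

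As for the comparison: the paper does not give its own proof of this theorem at all --- it merely records the statement and cites \cite{CaramelloJr} and \cite{ANGELLA2013117} for the proofs. Your sketch is essentially what one finds in those references, so there is no divergence of approach to discuss.
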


\section{(1,1)-Lefschetz theorem for projective toric orbifolds} \label{1,1tor}

\begin{dfn} A subvariety $X\subset \Pj_{\Sigma}^d$ is quasi-smooth if $\mathbb{V}(I_X)\subset \mathbb{A}^{\#\Sigma(1)}$ is smooth outside $Z(\Sigma)$.
 \end{dfn}

\begin{ex} Quasi-smooth hypersurfaces  or more generally quasi-smooth intersection subvarieties are quasi-smooth subvarieties  (see \cite{CoxBatyrev} or  \cite{Mavlyutov} for more details).

\end{ex}

\begin{rmk}Quasi-smooth subvarieties are suborbifolds of $\Pj_{\Sigma}^d$ in the sense of Satake in \cite{Satake}. Intuitively speaking they are subvarieties whose only singularities come from the ambient space.

\end{rmk}

\begin{thm}\label{1,1}Let $X\subset \Pj_{\Sigma}^d$ be a quasi-smooth subvariety. Then every $(1,1)$-cohomology class $\lambda\in H^{1,1}(X)\cap H^2(X,\Z)$ is algebraic
\end{thm}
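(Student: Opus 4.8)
The plan is to adapt the classical Lefschetz $(1,1)$-theorem to the orbifold setting, using the exponential sheaf sequence together with the Dolbeault-type identification stated in the excerpt. First I would write down the exponential sequence of sheaves on $X$,
\[
0 \to \Z \to \cO_X \xrightarrow{\exp} \cO_X^{*} \to 0,
\]
which is valid on any complex orbifold since the local $G$-invariant functions on a quotient chart $\C^d/G$ still form a sheaf for which the exponential map is surjective with kernel the constant sheaf $\Z$. Taking the long exact sequence in cohomology produces the connecting map $c_1 \colon H^1(X,\cO_X^{*}) \to H^2(X,\Z)$, and the image of $\Pic(X) \simeq H^1(X,\cO_X^{*})$ consists exactly of the classes killed by the composite $H^2(X,\Z) \to H^2(X,\cO_X)$.

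Next I would show that a class $\lambda \in H^{1,1}(X)\cap H^2(X,\Z)$ maps to zero in $H^2(X,\cO_X)$. This is where the Dolbeault theorem for compact complex orbifolds (the second bullet of the cited theorem, $H^{p,\bullet}(Z,\bar\partial)\simeq H^{\bullet}(Z,\Omega_Z^p)$) enters: it gives $H^2(X,\cO_X) = H^2(X,\Omega_X^0) \simeq H^{0,2}(X,\bar\partial)$, and via the de Rham isomorphism together with the Hodge decomposition of $H^2(X,\C)$ for compact orbifolds, the map $H^2(X,\Z)\to H^2(X,\cO_X)$ is identified with the projection of $H^2(X,\C)$ onto its $(0,2)$-component. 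Since $\lambda$ is of type $(1,1)$, this projection vanishes, so $\lambda$ lies in the image of $c_1$, i.e. $\lambda = c_1(\mathscr{L})$ for some orbifold line bundle $\mathscr{L}$ on $X$. Finally, such a line bundle on a projective orbifold has a meromorphic section whose divisor is an algebraic cycle representing (a multiple of) $\lambda$ up to the usual identification, so $\lambda$ is algebraic; here one may also invoke that on quasi-smooth $X \subset \Pj^d_\Sigma$ the class group and Picard group relate to the Cox ring grading, so that $c_1(\mathscr{L})$ is represented by a $T$-invariant Weil divisor pulled back to $X$.

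The main obstacle I anticipate is justifying the Hodge decomposition and the identification of $H^2(X,\Z)\to H^2(X,\cO_X)$ with the $(0,2)$-projection in the orbifold category — i.e. checking that the formal consequences of Hodge theory (finite dimensionality, $\overline{H^{p,q}} = H^{q,p}$, $H^2(X,\C) = \bigoplus_{p+q=2} H^{p,q}$) genuinely hold for compact projective orbifolds. This should follow from the cited de Rham and Dolbeault theorems combined with the fact that $X$, being a quasi-smooth suborbifold, carries an orbifold Kähler metric and admits Hodge theory for the $G$-invariant forms on charts (Satake's theory, or Baily's work on $V$-manifolds); I would cite these rather than reprove them. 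A secondary, more bookkeeping-level point is to make sure the comparison of sheaf cohomology $H^\bullet(X,\cO_X)$ (computed with the orbifold structure sheaf) with the analytic Dolbeault groups is the one furnished by the quoted theorem, so that no subtlety about the singular locus of $\Pj^d_\Sigma$ intervenes; since the singularities are quotient singularities and $X$ avoids the worst of them by quasi-smoothness, this is exactly the situation the cited results are designed for.
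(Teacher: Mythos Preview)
Your proposal is correct and follows essentially the same route as the paper: the exponential sheaf sequence on $X$, the identification $H^2(X,\cO_X)\simeq H^{0,2}(X)$ via the orbifold Dolbeault theorem (the paper cites Steenbrink here), and the commutativity of the de Rham/Dolbeault square reducing the argument to the classical one in Griffiths--Harris. Your additional remarks on the existence of a meromorphic section and on the Hodge-theoretic prerequisites are more explicit than the paper's sketch, but do not diverge from its strategy.
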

\begin{proof} From the exponential short exact sequence

$$0\rightarrow  \Z \rightarrow \cO_X\rightarrow \cO^*_{X}\rightarrow 0 $$

we have a long exact sequence in cohomology 

$$H^1(\cO^*_X)\rightarrow H^2(X,\Z)\rightarrow H^2(\cO_X)\simeq H^{0,2}(X)$$
where the last isomorphisms is due to Steenbrink in \cite{Steenbrink}. Now, it is enough to prove the commutativity of the next diagram 

$$\xymatrix{
H^2(X,\Z)\ar[r]\ar[d] & H^2(X,\cO_X)\ar[dd]_{\simeq}^{Dolbeault}\\
H^2(X,\C)\ar[d]_{de\, Rham}^{\simeq} &{}\\
H^2_{dR}(X,\C)\ar[r] & H^{0,2}_{\bar{\partial}}(X)
}
$$

The key points are the de Rham and  Dolbeault’s isomorphisms for orbifolds. The rest of the proof follows as  the $(1,1)$-Lefschetz theorem  in \cite{GriffithsHarris}. 

\end{proof}

\begin{rmk}\label{1,1} For $k=1$ and $\Pj_{\Sigma}^d$ as the projective space, we recover the classical $(1,1)$-Lefschetz theorem.

\end{rmk}

By the Hard Lefschetz Theorem for projective orbifolds (see \cite{Wang_2009} for details) we get an isomorphism of cohomologies :

$$H^{\bullet}(X,\Q)\simeq H^{2\dim X-\bullet}(X,\Q)$$
given by the Lefschetz morphism and since it is a morphism of Hodge structures, we have:
\begin{equation}\label{hard}
 H^{1,1}(X,\Q)\simeq H^{\dim X-1,\dim X-1}(X,\Q)   
\end{equation}

For $X$ as before:
\begin{cor}\label{dim} If the dimension of $X$ is $1$, $2$ or $3$. The Hodge conjecture holds on $X$. 

\end{cor}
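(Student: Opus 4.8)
The plan is to split into cases according to $\dim X \in \{1,2,3\}$ and in each case express an arbitrary rational Hodge class as a combination of classes coming from divisors, which are algebraic by Theorem \ref{1,1}. When $\dim X = 1$, there is nothing to prove: $H^{1,1}(X,\Q) = H^2(X,\Q)$ is spanned by the class of a point, which is obviously algebraic. When $\dim X = 2$, the only nontrivial Hodge classes to account for lie in $H^{1,1}(X,\Q)\cap H^2(X,\Z)$ (together with $H^0$ and $H^4$, which are spanned by the fundamental class and the point class); Theorem \ref{1,1} says precisely that each such integral $(1,1)$-class is algebraic, and tensoring with $\Q$ gives the claim for all rational $(1,1)$-classes, since rational classes are scalar multiples of integral ones.

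The case $\dim X = 3$ is the one that actually uses the Hard Lefschetz machinery quoted just above. Here the rational Hodge classes sit in $H^{1,1}(X,\Q)$ and in $H^{2,2}(X,\Q)$ (again modulo $H^0$ and $H^6$). For $H^{1,1}$ the argument is the same as in the surface case via Theorem \ref{1,1}. For $H^{2,2}$ I would invoke the isomorphism \eqref{hard}, which in the form needed reads $H^{1,1}(X,\Q)\simeq H^{\dim X - 1,\dim X-1}(X,\Q) = H^{2,2}(X,\Q)$, induced by cup product with a power of the ample (Lefschetz) class $\omega$, itself an algebraic divisor class. Thus every class in $H^{2,2}(X,\Q)$ is of the form $\omega \smile \eta$ with $\eta\in H^{1,1}(X,\Q)$; writing $\eta$ as a rational combination of algebraic divisor classes $[D_i]$ via Theorem \ref{1,1} and observing that $\omega \smile [D_i] = [\,\omega \cap D_i\,]$ is the class of an algebraic curve (a codimension-two algebraic cycle), we conclude that the arbitrary $(2,2)$-class is algebraic.

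The one technical point to be careful about — and the closest thing to an obstacle — is checking that the Lefschetz isomorphism \eqref{hard} is compatible with algebraic cycles, i.e. that cupping with the ample class sends fundamental classes of divisors to fundamental classes of honest subvarieties of $X$ rather than merely to rational cohomology classes. This is where one uses that $\omega$ is represented by an effective ample divisor and that intersecting with it (moving within its rational equivalence class if necessary, using quasi-smoothness to stay in good position) yields an algebraic subvariety; the orbifold singularities of $X$ coming from $\Pj^d_\Sigma$ do not obstruct this since the relevant intersection theory with $\Q$-coefficients is available on projective orbifolds. Everything else is a matter of assembling Theorem \ref{1,1}, the Hodge decomposition, and \eqref{hard}.
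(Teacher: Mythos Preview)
Your proposal is correct and follows essentially the same approach as the paper: apply Theorem \ref{1,1} to handle $H^{1,1}$ and invoke the Hard Lefschetz isomorphism \eqref{hard} to transport this to $H^{\dim X-1,\dim X-1}$. The paper's proof is terser than yours and does not spell out the point you flag at the end---that the Lefschetz operator preserves algebraicity because cupping with the ample class corresponds to intersecting with an effective divisor---but that is exactly the mechanism implicit in its appeal to Hard Lefschetz, so your expanded version is a faithful (and more complete) rendering of the same argument.
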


\begin{proof} If the $dim_{\C} X=1$ the result is clear by the Hard Lefschetz theorem for projective orbifolds. The dimension 2 and 3 cases are covered by Theorem \ref{1,1} and the Hard Lefschetz. theorem. 
\end{proof}

\section{Cayley trick and Cayley proposition}

The Cayley trick is a way to associate to a quasi-smooth intersection subvariety a quasi-smooth hypersurface. Let $L_1,\dots , L_s$ be line bundles on $\Pj^d_{\Sigma}$ and let $\pi: \Pj(E)\rightarrow \Pj_{\Sigma}^d$ be the projective space bundle associated to the vector bundle $E=L_1\oplus \cdots \oplus L_s$. It is known that $\Pj(E)$ is a $(d+s-1)$-dimensional simplicial toric variety whose fan depends on the degrees of the line bundles and the fan $\Sigma$. Furthermore, if the Cox ring, without considering the grading, of $\Pj^d_{\Sigma}$ is $\C[x_1,\dots,x_m]$ then the Cox ring of $\Pj(E)$ is 
$$\C[x_1,\dots,x_m,y_1,\dots, y_s] $$

Moreover for $X$ a quasi-smooth intersection subvariety cut off by $f_1,\dots, f_s$ with $\deg(f_i)=[L_i]$ we relate the hypersurface $Y$ cut off by $F=y_1f_1+\dots +y_sf_s$  which turns out to be quasi-smooth. For more details see Section 2 in \cite{Mavlyutov}.  

We will denote  $\Pj(E)$ as $\Pj^{d+s-1}_{\Sigma, X}$ to keep track of its relation with $X$ and $\Pj^{d}_{\Sigma}$. 

The following is a key remark. 

\begin{rmk}\label{obs} There is a morphism $\iota: X\rightarrow Y\subset \Pj^{d+s-1}_{\Sigma, X}$. Moreover every point $z:=(x,y)\in Y$ with $y\neq 0$ has a preimage. Hence for any  subvariety $W=\mathbb{V}(I_W)\subset X\subset \Pj^d_{\Sigma}$ there exists $W'\subset Y\subset \Pj^{d+s-1}_{\Sigma,X}$ such that $\pi(W')=W$, i.e., $W'=\{z=(x,y)\mid x\in W \}$.

\end{rmk} 

For $X\subset \Pj_{\Sigma}^d$ a quasi-smooth intersection variety the morphism in cohomology induced by the inclusion $i^*: H^{d-s}(\Pj^d_{\Sigma},\C)\rightarrow H^{d-s}(X,\C)$ 
is injective by Proposition 1.4 in \cite{Mavlyutov}. 

\begin{dfn}\label{primitive} The primitive cohomology of $H^{d-s}_{\rm prim}(X)$ is the quotient $H^{d-s}(X,\C)/ i^*(H^{d-s}(\Pj^d_{\Sigma},\C))$ and $H^{d-s}_{\rm prim}(X,\Q)$ with rational coefficients.

\end{dfn}

$H^{d-s}(\Pj_{\Sigma}^d, \C)$ and $H^{d-s}(X, \C)$ have pure Hodge structures, and the morphism $i^*$ is compatible with them, so that  $H^{d-s}_{\rm prim}(X)$ gets a pure Hodge structure. 

The next Proposition is the Cayley proposition. 

\begin{prop}\label{Cayley}[Proposition 2.3 in \cite{BruzzoMontoya} ] Let $X=X_1 \cap \dots\cap X_s$ be a quasi-smooth intersection subvariety in $\Pj_{\Sigma}^d$ cut off by homogeneous polynomials $f_1\dots f_s$. Then for  $p\neq \frac{d+s-1}{2}, \frac{d+s-3}{2} $
$$H^{p-1,d+s-1-p}_{\mbox{\rm\footnotesize prim}}(Y)\simeq H^{p-s,d-p}_{\mbox{\rm\footnotesize prim}}(X). $$

\end{prop}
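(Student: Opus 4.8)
The plan is to relate the cohomology of $Y$, a quasi-smooth hypersurface in $\Pj^{d+s-1}_{\Sigma,X}$, to the cohomology of the ambient projective bundle $\pi\colon \Pj(E)\to \Pj^d_\Sigma$ and then to that of $X$. First I would record that $\Pj(E)$ is a projective toric orbifold of dimension $d+s-1$, so the results of Section \ref{1,1tor} together with the Lefschetz theorem of \cite{Mavlyutov} and the Hard Lefschetz theorem of \cite{Wang_2009} apply to the pair $(Y,\Pj(E))$; in particular $H^{d+s-2}(\Pj(E),\C)\hookrightarrow H^{d+s-2}(Y,\C)$ is injective and $H^{d+s-2}_{\rm prim}(Y)$ carries a pure Hodge structure. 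The core computation is to identify the primitive middle cohomology of $Y$ with a piece of the cohomology of the zero section $X\subset Y$.

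The key step is a residue/Griffiths-type description of $H^{\bullet}_{\rm prim}(Y)$ in terms of the Jacobian ring of $F=y_1f_1+\dots+y_sf_s$ on the Cox ring $\C[x_1,\dots,x_m,y_1,\dots,y_s]$, following Batyrev--Cox and Mavlyutov. Because $F$ is linear in the $y_j$, the Jacobian ideal $\big(\partial F/\partial x_i, \partial F/\partial y_j\big) = \big(\sum_j y_j \partial f_j/\partial x_i,\; f_1,\dots,f_s\big)$, and one gets a graded isomorphism of the relevant pieces of the Jacobian ring of $Y$ with the corresponding pieces of the Jacobian ring of the complete intersection $X$ cut out by $f_1,\dots,f_s$, with a shift in degree coming from the $y$-variables (each monomial $y_1^{a_1}\cdots y_s^{a_s}$ contributing, after the linearity of $F$ is used, only the squarefree term $y_1\cdots y_s$ survives in top degree, forcing the shift $p\mapsto p-s$ on the $X$ side). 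Matching Hodge pieces via the Griffiths residue map on both sides then yields, for $p$ avoiding the two middle values $\tfrac{d+s-1}{2},\tfrac{d+s-3}{2}$ where the Jacobian description of the middle Hodge numbers needs extra care,
\[
H^{p-1,d+s-1-p}_{\rm prim}(Y)\simeq H^{p-s,d-p}_{\rm prim}(X).
\]
This is exactly Proposition 2.3 of \cite{BruzzoMontoya}, so at this point I would simply cite that computation rather than redo it.

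The main obstacle is bookkeeping the grading shift correctly: one must verify that under the Cayley correspondence the anticanonical-type twist on $\Pj(E)$ (which involves $\cO(1)$ on the fibers and the sum of the $[L_i]$) matches the twist on $\Pj^d_\Sigma$ so that the degree $p-1$ piece on $Y$ lands precisely in degree $p-s$ on $X$, and to check that the excluded range of $p$ is exactly where the pole-order filtration fails to compute the Hodge filtration on the nose. A secondary point is confirming that $i^*\colon H^{d+s-2}(\Pj(E))\to H^{d+s-2}(Y)$ really is injective in the orbifold setting so that ``primitive'' is well defined on both sides — but this is Proposition 1.4 of \cite{Mavlyutov} applied to $\Pj(E)$, and the purity of the resulting Hodge structure follows as in the discussion preceding Definition \ref{primitive}. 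Everything else is the standard Griffiths--Dwork machinery adapted to simplicial toric orbifolds, for which the de Rham and Dolbeault isomorphisms of Section \ref{1,1tor} are what make the argument go through.
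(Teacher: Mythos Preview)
The paper does not prove this proposition at all; it simply records it as Proposition 2.3 of \cite{BruzzoMontoya} and cites that reference, which is exactly what you end up doing after your sketch. Your outline of the Batyrev--Cox/Mavlyutov Jacobian-ring argument is a reasonable indication of how the cited proof goes, but as far as comparison with the present paper is concerned your approach is the same: cite \cite{BruzzoMontoya}.
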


\begin{cor} If $d+s=2(k+1)$, 

$$H^{k+1-s,k+1-s}_{\rm prim}(X)\simeq H^{k,k}_{\rm prim}(Y)$$

\end{cor}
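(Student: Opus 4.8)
The plan is to specialize Proposition \ref{Cayley} to the numerical situation at hand. We are given $d+s = 2(k+1)$, so $d+s-1 = 2k+1$ is odd and in particular the two exceptional values $\frac{d+s-1}{2}$ and $\frac{d+s-3}{2}$ in the statement of the Cayley proposition are $\frac{2k+1}{2}$ and $\frac{2k-1}{2}$, which are not integers. Hence for \emph{every} integer $p$ the isomorphism
$$H^{p-1,d+s-1-p}_{\mathrm{prim}}(Y)\simeq H^{p-s,d-p}_{\mathrm{prim}}(X)$$
is available, and no case is excluded.

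Next I would choose $p$ so that the left-hand side becomes the middle Hodge piece $H^{k,k}_{\mathrm{prim}}(Y)$. Since $\dim Y = d+s-1 = 2k+1 \cdots$ wait, $\dim Y = d+s-1$, and with $d+s=2(k+1)$ this gives $\dim Y = 2k+1$; the relevant cohomological degree is $d+s-1 = 2k+1$, no — the degree in Proposition \ref{Cayley} is $H^{p-1,d+s-1-p}$, total degree $d+s-2 = 2k$, which is exactly the middle degree for an ambient-dimension computation adapted to primitive cohomology. So I want $p-1 = k$ and $d+s-1-p = k$; both give $p = k+1$, consistent since $(k)+(k) = 2k = d+s-2$. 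Substituting $p = k+1$ into the right-hand side yields $H^{p-s,d-p}_{\mathrm{prim}}(X) = H^{k+1-s,\,d-k-1}_{\mathrm{prim}}(X)$, and using $d = 2(k+1)-s = 2k+2-s$ we get $d-k-1 = k+1-s$. Therefore the right-hand side is $H^{k+1-s,k+1-s}_{\mathrm{prim}}(X)$, which is precisely the claim.

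So the proof is essentially a substitution: verify that $p=k+1$ avoids the excluded half-integer values (automatic, as noted), plug into Proposition \ref{Cayley}, and simplify the bidegrees using the relation $d+s=2(k+1)$. There is no real obstacle here — the only thing to be careful about is the bookkeeping with $d$, $s$, $k$, and making sure the exceptional locus of the Cayley proposition genuinely does not interfere, which it does not because $2k\pm 1$ is odd. I would write this out in three or four lines.

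\begin{proof}
Since $d+s=2(k+1)$, we have $d+s-1=2k+1$, so $\frac{d+s-1}{2}$ and $\frac{d+s-3}{2}$ are not integers. Hence Proposition \ref{Cayley} applies for every integer $p$; taking $p=k+1$ gives
$$H^{k,k}_{\mathrm{prim}}(Y)=H^{p-1,d+s-1-p}_{\mathrm{prim}}(Y)\simeq H^{p-s,d-p}_{\mathrm{prim}}(X)=H^{k+1-s,\,d-k-1}_{\mathrm{prim}}(X).$$
Finally $d=2(k+1)-s$ gives $d-k-1=k+1-s$, so the right-hand side is $H^{k+1-s,k+1-s}_{\mathrm{prim}}(X)$.
\end{proof}
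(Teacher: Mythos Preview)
Your proof is correct and matches the paper's approach: the corollary is stated there without proof, as an immediate specialization of Proposition~\ref{Cayley} with $p=k+1$, and your verification that the excluded half-integer values $\frac{d+s-1}{2}$, $\frac{d+s-3}{2}$ are never hit when $d+s=2(k+1)$ is exactly the point. The bookkeeping is right; there is nothing to add.
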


\begin{rmk} The above isomorphisms are also true with rational coefficients since $H^{\bullet}(X,\C)=H^{\bullet}(X,\Q)\otimes_{\Q} \C.$ See the beginning of Section 7.1 in \cite{Voisin_2002} for more details.

\end{rmk}

\section{Main result}

\begin{thm}
Let $Y=\{F=y_1f_1+\cdots+y_kf_k=0\}\subset \Pj^{2k+1}_{\Sigma,X}$ be the quasi-smooth hypersurface associated to the quasi-smooth intersection surface $X=X_{f_1}\cap \dots\cap X_{f_k}\subset \Pj^{k+2}_{\Sigma}$. Then on $Y$ the Hodge conjecture holds.
\end{thm}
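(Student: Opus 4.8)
The plan is to reduce the Hodge conjecture on $Y$ to its middle degree $(k,k)$, treating the latter via the Cayley Proposition and Theorem \ref{1,1}. For $p\ne k$ the argument is the one of \cite{BruzzoMontoya}: by the Lefschetz theorem of \cite{Mavlyutov}, $H^{2p}(Y,\Q)\cong H^{2p}(\Pj^{2k+1}_{\Sigma,X},\Q)$ when $2p<2k$, and this is algebraic because the cohomology of a simplicial projective toric variety is; for $2p>2k$ one transports this back along the Hard Lefschetz isomorphism of \cite{Wang_2009}, which is cup product with a power of an algebraic class. So fix the middle degree. By Proposition 1.4 in \cite{Mavlyutov} the pullback $i^{*}\colon H^{2k}(\Pj^{2k+1}_{\Sigma,X},\Q)\hookrightarrow H^{2k}(Y,\Q)$ is injective, and using the polarization coming from Hard Lefschetz for orbifolds (\cite{Wang_2009}) one gets a splitting of rational Hodge structures $H^{2k}(Y,\Q)=i^{*}H^{2k}(\Pj^{2k+1}_{\Sigma,X},\Q)\oplus H^{2k}_{\rm prim}(Y,\Q)$; since the toric summand is pure of type $(k,k)$, passing to $(k,k)$-parts gives
$$H^{k,k}(Y,\Q)=i^{*}H^{2k}(\Pj^{2k+1}_{\Sigma,X},\Q)\oplus H^{k,k}_{\rm prim}(Y,\Q).$$
The first summand consists of algebraic classes (pullbacks of $T$-invariant cycles), so it suffices to show that $H^{k,k}_{\rm prim}(Y,\Q)$ is spanned by algebraic classes.

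Since $\dim_{\C}X=2$ and $d+s=(k+2)+k=2(k+1)$, the Corollary to Proposition \ref{Cayley}, with rational coefficients, gives an isomorphism of Hodge structures $H^{1,1}_{\rm prim}(X,\Q)\simeq H^{k,k}_{\rm prim}(Y,\Q)$; in particular the two sides have the same dimension. By Theorem \ref{1,1} every class in $H^{1,1}(X,\Q)$ is algebraic, and hence so is every class in the subspace $H^{1,1}_{\rm prim}(X,\Q)$, which is therefore spanned by the primitive projections $[W]_{\rm prim}$ of classes of curves $W\subset X$. Now I would transport these classes to $Y$ as in Remark \ref{obs}. Put $\widetilde{X}:=\pi^{-1}(X)$; since $f_{1}(x)=\cdots=f_{k}(x)=0$ for $x\in X$, the whole fibre $\Pj^{k-1}$ over such a point lies in $Y$, so $\widetilde{X}$ is a $\Pj^{k-1}$-bundle $q\colon\widetilde{X}\to X$ contained in $Y$ through a closed immersion $j\colon\widetilde{X}\hookrightarrow Y$ of codimension $k-1$. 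The correspondence $j_{*}q^{*}\colon H^{1,1}(X,\Q)\to H^{k,k}(Y,\Q)$ sends algebraic classes to algebraic classes, and for a curve $W\subset X$ one has $j_{*}q^{*}[W]=[q^{-1}(W)]=[W']$, with $W'$ the subvariety of Remark \ref{obs}. Composing $j_{*}q^{*}$ with the projection onto $H^{k,k}_{\rm prim}(Y,\Q)$, whose kernel is the algebraic toric summand, yields a map $\Phi\colon H^{1,1}_{\rm prim}(X,\Q)\to H^{k,k}_{\rm prim}(Y,\Q)$ that takes values in algebraic classes. Hence everything reduces to proving that $\Phi$ is surjective.

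By the equality of dimensions above, surjectivity of $\Phi$ is equivalent to its injectivity, and this I would deduce from a projection-formula computation. The self-intersection formula gives $j^{*}j_{*}=\,\cdot\,c_{k-1}(N_{\widetilde{X}/Y})$; writing $\widetilde{X}=\{f_{1}=\cdots=f_{k}=0\}$ as a complete intersection in $\Pj(E)$ and $Y$ as a divisor there, and using that on $\widetilde{X}=\Pj(E|_{X})$ the relative hyperplane class $\xi$ satisfies $q_{*}\xi^{k-1}=1$ and $q_{*}\xi^{j}=0$ for $0\le j\le k-2$, one finds $q_{*}c_{k-1}(N_{\widetilde{X}/Y})=(-1)^{k-1}$, so $q_{*}j^{*}j_{*}q^{*}=(-1)^{k-1}\,{\rm id}$ on $H^{\bullet}(X,\Q)$; the same vanishing of $q_{*}\xi^{j}$ together with the projective bundle formula for $\Pj^{2k+1}_{\Sigma,X}\to\Pj^{k+2}_{\Sigma}$ shows that $q_{*}j^{*}$ maps the toric summand $i^{*}H^{2k}(\Pj^{2k+1}_{\Sigma,X},\Q)$ into $i_{X}^{*}H^{2}(\Pj^{k+2}_{\Sigma},\Q)$. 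Decomposing $j_{*}q^{*}\alpha=A(\alpha)+\Phi(\alpha)$ along the splitting of $H^{k,k}(Y,\Q)$ and applying $q_{*}j^{*}$ gives $(-1)^{k-1}\alpha=q_{*}j^{*}A(\alpha)+q_{*}j^{*}\Phi(\alpha)$; the first summand lies in $i_{X}^{*}H^{2}(\Pj^{k+2}_{\Sigma},\Q)$ while $\alpha\in H^{1,1}_{\rm prim}(X,\Q)$ lies in the complementary subspace, so $\Phi(\alpha)=0$ forces $\alpha=0$. Thus $\Phi$ is an isomorphism, $H^{k,k}_{\rm prim}(Y,\Q)$ is spanned by the algebraic classes $\Phi([W]_{\rm prim})$, and with the toric summand every rational $(k,k)$-class on $Y$ is algebraic. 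When $k=1$ the hypersurface $Y$ is isomorphic to the surface $X$, so the statement is already Theorem \ref{1,1}.

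The main obstacle is the third step: realizing the Cayley isomorphism concretely enough to identify it, up to classes pulled back from the toric ambient space, with the geometric transport $[W]\mapsto[W']$ of Remark \ref{obs} — equivalently, proving the injectivity of $\Phi$ — which is exactly where the normal bundle and projective bundle computations are needed. Everything else is formal, given Theorem \ref{1,1}, the Cayley Proposition, and the standard description of the cohomology of projective simplicial toric varieties.
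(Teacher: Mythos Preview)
Your argument is correct and shares the paper's overall architecture: reduce to the middle primitive piece via the Cayley isomorphism, invoke Theorem~\ref{1,1} on the surface $X$, and transport the resulting algebraic curves into $Y$ through the $\Pj^{k-1}$-bundle $\widetilde X=\pi^{-1}(X)\subset Y$. The divergence is in the key step, namely showing that the transported classes actually span $H^{k,k}_{\rm prim}(Y,\Q)$. The paper argues by contradiction at the level of subvarieties: if some $\lambda_{C_j}$ were the restriction of a toric class on $\Pj^{2k+1}_{\Sigma,X}$, one represents that class by a cycle $V$, intersects with $Y$, projects via $\pi$, and concludes that $[C_j]$ already comes from $\Pj^{k+2}_{\Sigma}$, contradicting primitivity on $X$. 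You replace this geometric projection by the correspondence $q_*j^*$ and the normal-bundle computation $q_*j^*j_*q^*=(-1)^{k-1}\,{\rm id}$, together with the projective-bundle check that $q_*j^*$ carries the toric summand of $H^{2k}(Y,\Q)$ into $i_X^*H^2(\Pj^{k+2}_{\Sigma},\Q)$. Both arguments encode the same ``push the ambient hypothesis down to $X$'' idea; yours is more technical but yields an honest isomorphism $\Phi$ and hence automatic linear independence of the transported classes, while the paper's version is more elementary but treats only individual basis vectors and is looser about the passage between equality of cohomology classes and literal equality of intersections $V\cap Y=C_j$.
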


\begin{proof}  If $H^{k,k}_{\rm prim}(X,\Q)=0$ we are done. So  let us assume  $H^{k,k}_{\rm prim}(X,\Q)\neq 0$. By the Cayley proposition $H^{k,k}_{\rm prim}(Y,\Q)\simeq H^{1,1}_{\rm prim}(X,\Q)$ and by the $(1,1)$-Lefschetz theorem for projective toric orbifolds there is a non-zero algebraic basis  $\lambda_{C_1}, \dots, \lambda_{C_n}$ with rational coefficients of  $H^{1,1}_{\rm prim}(X,\Q)$, that is, there are $n:=h^{1,1}_{\rm prim}(X,\Q)$ algebraic curves $C_1,\dots, C_n$ in $X$ such that under the Poincaré duality the class in homology $[C_i]$ goes to $\lambda_{C_i}$, $[C_i]\mapsto \lambda_{C_i}$. Recall that the Cox ring of $\Pj^{k+2}$ is contained in the Cox ring of $\Pj_{\Sigma,X}^{2k+1}$ without considering the grading. Considering the grading  we have that if $\alpha \in {\rm Cl}(\Pj_{\Sigma}^{k+2})$ then $(\alpha,0)\in {\rm Cl}(\Pj^{2k+1}_{\Sigma,X})$. So the polynomials defining $C_i\subset \Pj^{k+2}_{\Sigma}$ can be interpreted in $\Pj_{X,\Sigma}^{2k+1}$ but with different degree. Moreover,  by Remark \ref{obs} each $C_i$ is contained in $Y=\{F=y_1f_1+\cdots+y_kf_k=0\}$ and furthermore it has  codimension $k$. 

\textbf{Claim:} $\{\lambda_{C_i}\}_{i=1}^n$ is a basis of $H^{k,k}_{\rm prim}(Y,\Q)$.\\
It is enough to prove that $\lambda_{C_i}$ is different from zero in   $H^{k,k}_{\rm prim}(Y,\Q)$ or equivalently  that the cohomology classes $\{ \lambda_{C_i} \}^n_{i=1}$ do not come from the ambient space. By contradiction, let us assume that there exists a $j$ and $C\subset \Pj^{2k+1}_{\Sigma, X}$ such that $\lambda_{C}\in H^{k,k}(\Pj^{2k+1}_{\Sigma,X},\Q)$ with $i^{*}(\lambda_{C})=\lambda_{C_j}$  or in terms of homology there exists a $(k+2)$-dimensional algebraic subvariety  $V\subset \Pj^{2k+1}_{\Sigma, X}$ such that $V\cap Y=C_j$ so they are equal as a homology class of $\Pj^{2k+1}_{\Sigma, X}$,i.e., $[V\cap Y]=[C_j]$ . It is easy to check that $\pi(V)\cap X= C_j$ as a subvariety of $\Pj_{\Sigma}^{k+2}$ where $\pi: (x,y)\mapsto x $. Hence $[\pi(V)\cap X]= [C_j]$ which is equivalent to say that $\lambda_{C_j}$ comes from $\Pj^{k+2}_{\Sigma}$ which contradicts the choice of $[C_{j}]$.

\end{proof}

\begin{rmk} Into the proof of the previous theorem, the key fact was that on $X$  the Hodge conjecture holds and we translate it to $Y$ by contradiction. So, using an analogous argument we have:
    
\end{rmk}

\begin{prop}\label{meta}
Let $Y=\{F=y_1f_s+\cdots+y_sf_s=0\}\subset \Pj^{2k+1}_{\Sigma,X}$ be the quasi-smooth hypersurface associated to a quasi-smooth intersection subvariety $X=X_{f_1}\cap \dots\cap X_{f_s}\subset \Pj^{d}_{\Sigma}$ such that $d+s=2(k+1)$. If the Hodge conjecture holds on $X$  then it holds as well on $Y$. 
\end{prop}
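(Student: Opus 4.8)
The plan is to mimic the proof of the preceding theorem almost verbatim, replacing the specific use of the $(1,1)$-Lefschetz theorem (which supplied algebraic generators of $H^{1,1}_{\rm prim}(X,\Q)$ in the surface case) by the hypothesis that the Hodge conjecture holds on $X$. First I would dispose of the trivial case: if $H^{k+1-s,k+1-s}_{\rm prim}(X,\Q)=0$ then by the Corollary to the Cayley proposition $H^{k,k}_{\rm prim}(Y,\Q)=0$, and since $H^{k,k}(Y,\Q)$ is then spanned by classes restricted from the ambient toric orbifold $\Pj^{2k+1}_{\Sigma,X}$ — each of which is algebraic because $\Pj^{2k+1}_{\Sigma,X}$ is a projective simplicial toric variety and its rational cohomology is generated by classes of torus-invariant subvarieties — the Hodge conjecture holds on $Y$ for cheap. (Strictly, one also needs that the full $H^{k,k}(Y,\Q)$ decomposes as $i^*H^{k,k}(\Pj^{2k+1}_{\Sigma,X},\Q)\oplus H^{k,k}_{\rm prim}(Y,\Q)$, which follows from the definition of primitive cohomology together with the injectivity of $i^*$ cited before Definition \ref{primitive}.)

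So assume $H^{k+1-s,k+1-s}_{\rm prim}(X,\Q)\neq 0$. By the Corollary following Proposition \ref{Cayley} (valid with rational coefficients by the Remark after it) there is an isomorphism $H^{k,k}_{\rm prim}(Y,\Q)\simeq H^{k+1-s,k+1-s}_{\rm prim}(X,\Q)$. Since $d+s=2(k+1)$, we have $k+1-s = \tfrac{d-s}{2}$ and $2(k+1-s)=d-s$, so the right-hand side is the middle primitive cohomology $H^{\frac{d-s}{2},\frac{d-s}{2}}_{\rm prim}(X)$; as the Hodge conjecture holds on $X$ by hypothesis, there is a basis $\lambda_{Z_1},\dots,\lambda_{Z_n}$ of $H^{k+1-s,k+1-s}_{\rm prim}(X,\Q)$ with $n=h^{k+1-s,k+1-s}_{\rm prim}(X,\Q)$, where each $Z_i\subset X$ is an algebraic subvariety of dimension $k+1-s$ (codimension $s$ in $X$) whose Poincaré-dual class is $\lambda_{Z_i}$. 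As in the theorem, using Remark \ref{obs} each $Z_i$ lifts to a subvariety $Z_i'=\{(x,y)\mid x\in Z_i\}\subset Y\subset \Pj^{2k+1}_{\Sigma,X}$; since $\dim Y = 2k$ and the fibre direction contributes dimension $s-1$, $Z_i'$ has dimension $(k+1-s)+(s-1)=k$, i.e. codimension $k$ in $Y$, so its class lives in $H^{k,k}(Y,\Q)$.

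The core step is then the claim that $\{\lambda_{Z_i}\}_{i=1}^n$ is a basis of $H^{k,k}_{\rm prim}(Y,\Q)$, and the argument is exactly the contradiction argument of the theorem: it suffices to show no nonzero rational combination of the $\lambda_{Z_i}$ comes from $i^*H^{k,k}(\Pj^{2k+1}_{\Sigma,X},\Q)$. Suppose $\lambda_C\in H^{k,k}(\Pj^{2k+1}_{\Sigma,X},\Q)$ restricts to some $\lambda_{Z_j}$; dually there is a $(k+s)$-dimensional algebraic $V\subset \Pj^{2k+1}_{\Sigma,X}$ with $[V\cap Y]=[Z_j']$ in $H_{*}(\Pj^{2k+1}_{\Sigma,X})$. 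Applying $\pi\colon(x,y)\mapsto x$ gives $[\pi(V)\cap X]=[Z_j]$ in $H_{*}(\Pj^{d}_{\Sigma})$, i.e. $\lambda_{Z_j}$ is the restriction of a class from $\Pj^{d}_{\Sigma}$, contradicting $\lambda_{Z_j}\in H^{k+1-s,k+1-s}_{\rm prim}(X,\Q)$. Hence the $\lambda_{Z_i}$ are linearly independent in $H^{k,k}_{\rm prim}(Y,\Q)$, and since their number $n$ equals $\dim_{\Q}H^{k,k}_{\rm prim}(Y,\Q)$ by the Cayley isomorphism, they form a basis. Combined with the first paragraph, every class in $H^{k,k}(Y,\Q)=i^*H^{k,k}(\Pj^{2k+1}_{\Sigma,X},\Q)\oplus H^{k,k}_{\rm prim}(Y,\Q)$ is algebraic, and by Hard Lefschetz for projective orbifolds (cf. \eqref{hard}) this propagates to all $H^{p,p}(Y,\Q)$, so the Hodge conjecture holds on $Y$.

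I expect the main obstacle to be bookkeeping around the dimension/codimension count for the lifts $Z_i'$ and the precise form of the Cayley isomorphism when $s>1$ (the theorem only treated $s=k$, i.e. surfaces with $d=k+2$), together with making sure the splitting $H^{k,k}(Y,\Q)=i^*(\cdots)\oplus H^{k,k}_{\rm prim}(Y,\Q)$ is legitimate in the orbifold setting — everything else is a word-for-word transcription of the previous proof with "$(1,1)$-Lefschetz on $X$" replaced by "Hodge conjecture on $X$".
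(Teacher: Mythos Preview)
Your proposal is correct and matches the paper's approach exactly: the paper's ``proof'' of this proposition is nothing more than the remark that the argument of the preceding theorem carries over once the $(1,1)$-Lefschetz input is replaced by the hypothesis that the Hodge conjecture holds on $X$, and that is precisely what you have spelled out. Your flagged bookkeeping worry is apt --- the cycle $V$ Poincar\'e-dual to a class in $H^{k,k}(\Pj^{2k+1}_{\Sigma,X},\Q)$ should have complex dimension $k+1$, not $k+s$ --- but the paper's own proof of the theorem contains the analogous slip (writing $k+2$), and it does not affect the argument.
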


\begin{cor}If the dimension of $Y$ is $2s-1$, $2s$ or $2s+1$ then the Hodge conjecture holds on $Y$.

\end{cor}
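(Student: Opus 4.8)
The plan is to reduce the statement to Proposition~\ref{meta} together with Corollary~\ref{dim} by a dimension count. Recall that $Y$ is a quasi-smooth hypersurface in $\Pj^{2k+1}_{\Sigma,X}$ associated under the Cayley trick to a quasi-smooth intersection subvariety $X=X_{f_1}\cap\cdots\cap X_{f_s}\subset\Pj^d_\Sigma$ with $d+s=2(k+1)$, so that $\dim_\C Y = d+s-1-1 = 2(k+1)-2 = 2k$ and $\dim_\C X = d-s$. First I would express everything in terms of $s$ and $\dim Y$. From $d+s=2(k+1)$ we get $2k=\dim Y$, hence $k=\dim Y/2$ when $\dim Y$ is even; and $\dim X = d-s = 2(k+1)-2s = \dim Y + 2 - 2s$.

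Next I would run the three cases. If $\dim Y = 2s$, then $\dim X = 2s+2-2s = 2$, a surface, so the Hodge conjecture holds on $X$ by Corollary~\ref{dim} (the surface case is exactly the content of the main Theorem, but also follows directly from Theorem~\ref{1,1} and Hard Lefschetz). If $\dim Y = 2s-1$ then $\dim X = 2s-1+2-2s = 1$, a curve, and the Hodge conjecture holds trivially on $X$ by the Hard Lefschetz theorem for projective orbifolds, again via Corollary~\ref{dim}. If $\dim Y = 2s+1$ then $\dim X = 2s+1+2-2s = 3$, a threefold, and the Hodge conjecture holds on $X$ by Corollary~\ref{dim}. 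In all three cases, since the Hodge conjecture holds on $X$, Proposition~\ref{meta} immediately gives that it holds on $Y$.

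One small bookkeeping point I would check carefully: Proposition~\ref{meta} is stated for $Y\subset\Pj^{2k+1}_{\Sigma,X}$ with the relation $d+s=2(k+1)$ built in, and here $k$ is determined by $\dim Y$ via $\dim Y = 2k$ in the even cases; in the odd case $\dim Y = 2s+1$ one has $d-s=3$, and combined with $d+s=2(k+1)$ this forces $d = k+2$ and $s=k-1$, which is consistent. So the only genuine input is translating the hypothesis on $\dim Y$ into $\dim X\in\{1,2,3\}$ and invoking Corollary~\ref{dim}. I do not anticipate a real obstacle here: the argument is a two-line dimension count feeding into two already-established results. The only thing to be careful about is that the three values of $\dim Y$ must be matched to the correct parity and to the correct codimension-$s$ data, so that $\dim X$ really lands in $\{1,2,3\}$; once that is verified, Proposition~\ref{meta} closes the argument.
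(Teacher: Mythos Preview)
Your approach is exactly the paper's one-line proof (``By Proposition~\ref{meta} and Corollary~\ref{dim}''): the dimension count $\dim X = \dim Y + 2 - 2s$ shows that $\dim Y \in \{2s-1,\,2s,\,2s+1\}$ forces $\dim X \in \{1,2,3\}$, so Corollary~\ref{dim} gives the Hodge conjecture on $X$ and Proposition~\ref{meta} transfers it to $Y$. One correction to your bookkeeping paragraph: since $d+s=2(k+1)$ forces $\dim Y = d+s-2 = 2k$ to be even, the odd cases $\dim Y = 2s\pm 1$ are in fact vacuous under the standing hypotheses of Proposition~\ref{meta} (your claimed values $d=k+2$, $s=k-1$ give $d+s=2k+1\neq 2(k+1)$), but vacuous cases are trivially true, so the argument stands.
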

\begin{proof} By Proposition \ref{meta} and Corollary \ref{dim}.

\end{proof}

\bibliographystyle{acm}
\bibliography{references}

\end{document}